\author{Philip Arathoon}
\author{Matthew D. Kvalheim}
\address
{Department of Mathematics, University of Michigan,
	Ann Arbor, MI, USA}
\email{philash@umich.edu, kvalheim@umich.edu}
\title[]{Koopman embedding and super-linearization counterexamples with isolated equilibria}
\subjclass[2020]{Primary 37C15}
\pgfplotsset{width=10cm,compat=1.11}
\newcommand{\concept}[1]{\textbf{#1}}
\newcommand{\N}{\mathbb{N}}
\newcommand{\R}{\mathbb{R}}
\newcommand{\Sym}{\textnormal{Sym}}
\newcommand{\hooklongrightarrow}{\lhook\joinrel\longrightarrow}
\theoremstyle{definition}
\newtheorem{Th}{Theorem}
\newtheorem{Prop}{Proposition}
\newtheorem*{Quest-non}{Question}
\newtheorem*{Th-non}{Theorem}
\newcommand{\thistheoremname}{}
\newtheorem*{genericthm}{\thistheoremname}
{\renewcommand{\thistheoremname}{Theorem~\ref{#1}$'$}%
	\begin{genericthm}}
	{\end{genericthm}}
\newtheorem*{Def*}{Definition}
\newtheorem{Ex}{Example}
\newtheorem{Rem}{Remark}
\begin{document}
	
	\begin{abstract}
    A frequently repeated claim in the ``applied Koopman operator theory''  literature is that a dynamical system with multiple isolated equilibria cannot be linearized in the sense of admitting a smooth embedding as an invariant submanifold of a linear dynamical system.
    This claim is sometimes made only for the class of super-linearizations, which additionally require that the embedding ``contain the state''.
    We show that both versions of this claim are false by constructing (super-)linearizable smooth dynamical systems on $\R^k$ having any countable (finite) number of isolated equilibria for each $k>1$.
	\end{abstract}
	\maketitle

    A linearizing embedding of a nonlinear smooth dynamical system is a global identification of the nonlinear system with an invariant submanifold of a linear dynamical system.
    Linearizing embeddings have been studied by various communities and are of central importance in the rapidly developing ``applied Koopman operator theory'' literature \cite{brunton2022modern}.
    An oft-repeated claim in that literature is that any dynamical system with multiple isolated equilibria cannot be linearized by a smooth embedding, or at least not by an embedding that ``contains the state''.
    We call the latter type of linearizing embeddings  super-linearizations.
    
    The first claim was shown to be false by the authors in \cite[Ex.~4]{kvalheim2023linearizability} if non-Euclidean state spaces are allowed.  
    In the present paper we show that both claims are also false for Euclidean state spaces by constructing for each $k > 1$ (i) linearizable dynamical systems on $\R^k$ having any countable number of isolated equilibria and (ii) super-linearizable dynamical systems on $\R^k$ having any finite number of equilibria.
    Thus, there are more (super-)linearizable dynamical systems than previously believed.

    Super-linearizations are of practical importance for engineering applications since they are invertible in closed form.  
    Our notion of super-linearization is slightly different from that of Belabbas and Chen \cite{belabbas2023sufficient} since we consider embeddings into linear rather than affine dynamical systems.

	We now proceed more formally.
	In this paper all manifolds and maps between them are smooth ($C^\infty$), and embeddings are smooth embeddings \cite{lee2013smooth}.
	Let \(M\) be a manifold and \(\Phi\colon~\R\times M\rightarrow M\) be the flow of a dynamical system. A map \(f\colon M_1\rightarrow M_2\) between two such dynamical systems \((M_1,\Phi_1)\) and \((M_2,\Phi_2)\) is called \concept{equivariant} (also called a semi-conjugacy) if 
	\begin{equation}
		\Phi_2^t\circ f=f\circ\Phi_1^t
	\end{equation}
	for all \(t\in \R\). If \(f\) is a diffeomorphism then we say that \((M_1,\Phi_1)\) and \((M_2,\Phi_2)\) are \concept{smoothly conjugate dynamical systems}.
	
	A dynamical system \((M,\Phi)\) admits a \concept{linearizing embedding} if there exists an equivariant embedding \(f\colon M\to \R^n\) of \((M,\Phi)\) into \((\R^n,\Psi)\) where \(\Psi^t=\text{exp}(At)\) is the flow of a linear system of ordinary differential equations on \(\R^n\) generated by some matrix \(A\). Recall that \(f\) is an embedding if it is a homeomorphism of \(M\) onto its image and if the derivative \(df\) is injective \cite[p.~85]{lee2013smooth}.
	
	From now on we shall assume that \(M\) is an open subset of \(\R^k\). For such systems there is a stronger notion of linearizability: a linearizing embedding \(f\colon M\rightarrow\R^{k+m}\) is a \concept{super-linearizing embedding} if it is of the form \(f(x)=(x,p(x))\) for some map \(p\colon M\rightarrow\R^m\). 
	Observe that the image of \(f\) is the graph of \(p\),
	\[
	\{(x,p(x))~|~x\in M\}.
	\]
	For this reason it will be helpful for us to refer to embeddings \(f\colon M\rightarrow\R^n\) as \concept{graphlike} if the image of \(f\) can be written in the form 
	\[
	\{x+\varphi(x)~|~x\in N\}
	\]
	where \(N\) is an open subset of some \(k\)-dimensional subspace of \(\R^n\) and \(\varphi\colon N\rightarrow U\) is a smooth map into a complementary subspace \(U\). 
	Equivalently, \(f\) is graphlike if and only if there exists a linear subspace \(U\subset\R^n\) of codimension \(k\) whose affine translates transversely intersect the image of \(f\) in at most one point.
	Every super-linearizing embedding is graphlike, and the image of  any graphlike linearizing embedding of a dynamical system is the image of a super-linearizing embedding of some smoothly conjugate dynamical system.

	\begin{Rem}
		Suppose \(f\colon M_1\rightarrow M_2\) defines a smooth conjugacy between two dynamical systems and \(g\) is a linearizing embedding of \((M_2,\Phi_2)\). Then \(g\circ f\) is a linearizing embedding of \((M_1,\Phi_1)\). Therefore, the property of admitting a linearizable embedding is well defined on the equivalence classes of smoothly conjugate dynamical systems. However, if \(g\) is a super-linearizing embedding then \(g\circ f\) need not be. Put differently, unlike linearizability, being super-linearizable is not a manifestly diffeomorphism-invariant attribute of dynamical systems.  
	\end{Rem}

\begin{Th}\label{claim}
	For any \(k>1\) there exists a super-linearizable dynamical system on \(\R^k\) with any given finite number of isolated equilibria.
\end{Th}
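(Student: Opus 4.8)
My plan is to translate super-linearizability into a statement about invariant graphs of linear systems, and then to build the required graph by prescribing Koopman eigenfunctions of saddle type.

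First I would record the following reduction. Write the sought system on $\R^k$ as the flow $\Phi$ of a smooth vector field $v$, and for $g\colon\R^k\to\R$ let $L_v g:=dg(v)$ denote the derivative of $g$ along $v$, extended componentwise to vector-valued maps. Consider a candidate super-linearizing embedding $f(x)=(x,p(x))$ with $p\colon\R^k\to\R^m$. For \emph{any} smooth $p$ this $f$ is automatically an embedding: it is injective since its first component is $\id$, it is an immersion since $df=(\id,dp)$ has rank $k$, and it is a homeomorphism onto its image since projection to the first factor is a continuous inverse. Hence $f$ is a super-linearizing embedding if and only if it is equivariant into some linear system $\Psi^t=\exp(At)$ on $\R^{k+m}$, which by differentiating $\Psi^t\circ f=f\circ\Phi^t$ at $t=0$ is equivalent to $L_v f=A f$; i.e.\ the finite-dimensional space $V=\mathrm{span}\{1,x_1,\dots,x_k,p_1,\dots,p_m\}$ must be invariant under $L_v$ (the constant $1$ being carried as one component of $p$). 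Splitting $A$ into blocks this reads $v=A_{11}x+A_{12}p$ with $L_v p\in V$, and the equilibria of $\Phi$ are exactly $f^{-1}(\ker A)$. So it suffices to produce a vector field $v$ with exactly $N$ isolated zeros admitting such a finite $L_v$-invariant observable space containing the coordinates; geometrically $v$ is the dynamics induced on the invariant graph $\Gamma=f(\R^k)$, and its equilibria are the points of $\Gamma\cap\ker A$.

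Second, I would carry out the construction through eigenfunctions. The components of $f$ in a complex Jordan basis of $A$ are (generalized) Koopman eigenfunctions, satisfying $L_v\psi=\lambda\psi$, so the problem becomes one of prescribing eigenfunctions whose joint vanishing is exactly $N$ points and from which $v$, $A$, and the coordinates can be recovered. The decisive choice is the spectrum of $A$: I would give it a kernel of dimension $m$, so that the $k$-dimensional $\Gamma$ meets $\ker A$ transversally in isolated points, and — crucially — take it to be of \emph{saddle type}, with both expanding and contracting nonzero eigenvalues. In the model case $k=2$ with eigenvalues $\{+1,-1,0,0\}$ this amounts to finding real eigenfunctions $C,D$ with $L_v C=C$, $L_v D=-D$ and $v=(C,D)$; then $K=CD$ is a first integral, its zero set $\{C=0\}\cup\{D=0\}$ is the union of the local stable and unstable manifolds, and the equilibria are the transverse crossings of these two curves. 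I would solve the quasilinear equations for $C,D$ by the method of characteristics (i.e.\ along the flow) so that $\{C=0\}$ and $\{D=0\}$ cross transversally at exactly $N$ prescribed points, each an honest hyperbolic saddle, with $C,D$ extending smoothly over all of $\R^k$; heteroclinically concatenating $N$ such saddle cells realizes the count, and the general $k>1$ case would follow by taking a product with a linear contracting factor in the remaining $k-2$ directions.

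The step I expect to be the main obstacle is producing \emph{more than one} isolated equilibrium while keeping $V$ finite-dimensional. There is a genuine rigidity: if all nonzero eigenvalues of $A$ had the same sign, then every orbit on $\Gamma$ would converge to $\ker A$ in backward (or forward) time, the resulting backward-limit map $\Gamma\to\Gamma\cap\ker A$ would be continuous with finite image and hence constant on the connected $\Gamma$, forcing $\Gamma\cap\ker A$ to be a single point and so at most one equilibrium. This is the Euclidean shadow of the elementary one-dimensional fact that consecutive equilibria of $\dot x=v(x)$ alternate in stability and therefore cannot share a single globally smooth eigenfunction, which is exactly why $k=1$ is excluded. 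Defeating it is what forces both the mixed-sign (saddle) spectrum and the use of genuinely transcendental eigenfunctions, since polynomial observables never close up under $L_v$ for nonlinear $v$. The bulk of the work is thus the explicit, smooth, globally defined solution of the eigenfunction equations realizing exactly $N$ saddles, together with verifying completeness of the induced flow and that the graph is defined over all of $\R^k$.
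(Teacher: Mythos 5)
Your reduction in the first step is correct and is equivalent to the setup in the paper: for any smooth \(p\) the map \(f(x)=(x,p(x))\) is automatically an embedding, equivariance is equivalent to \(L_vf=Af\), i.e.\ to a finite-dimensional \(L_v\)-invariant observable space containing the coordinates, and the equilibria are \(f^{-1}(\ker A)\). The fatal problem is in your gluing step: \emph{``heteroclinically concatenating \(N\) such saddle cells'' cannot work}, because no (super-)linearizable flow can possess a heteroclinic orbit between distinct equilibria. Indeed, if \(\Phi^t x\to q\) as \(t\to+\infty\) and \(\Phi^t x\to p\) as \(t\to-\infty\) with \(p\neq q\), then by continuity of \(f=(\id,p)\) the linear orbit \(e^{At}f(x)\) has limits in both time directions; splitting into generalized eigenspaces of \(A\), any linear orbit with two-sided limits is constant, so \(f(x)\) would be fixed and \(x\) an equilibrium --- a contradiction. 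The paper flags exactly this obstruction in its closing discussion. Consequently the saddles must be joined \emph{without} connecting orbits: each of your zero sets \(\{C=0\}\), \(\{D=0\}\) must be disconnected, with every separatrix escaping to infinity, and the ``cells'' must communicate through families of orbits unbounded in both time directions. This is precisely how the paper's surface \(\Sigma^l\) is arranged (the bridge \(B\) consists of hyperbola orbits with no limit in either direction), and your proposal as written does not arrange it.

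Even setting the gluing aside, the heart of your construction --- globally smooth \(C,D\) on \(\R^2\) with \(v=(C,D)\), \(L_vC=C\), \(L_vD=-D\), and exactly \(N\) transverse common zeros --- is asserted rather than proved, and ``the method of characteristics'' is not available in the usual sense: the characteristics are the orbits of the unknown field itself, so the system is a coupled self-consistent quasilinear one, and global smoothness across the separatrices is exactly the crux you defer (``the bulk of the work''). The paper avoids solving any PDE: it first constructs the invariant surface \(\Sigma^l\) explicitly inside the fixed linear flow \eqref{linflow} on \(\R^3\), obtaining a linearizing (not super-linearizing) embedding, and then upgrades it by composing with the polynomial map \(\Delta^m\) into \(P^m({\R^3}^*)\); by Proposition~\ref{only_prop} the composite is graphlike --- hence super-linearizes a smoothly conjugate system --- provided \(\Sigma^l\) is ``tamed'' by \(k\) explicit polynomials, which the paper exhibits (\(q=y\) and a degree-\(l\) polynomial in \(x,z\)) and verifies by a transversality computation; the passage to \(\R^k\) is exactly your product trick. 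Two smaller points: your side claim that polynomial observables never close up under \(L_v\) for nonlinear \(v\) is false (for \(v=(x,\,y+x^2)\) the span of \(x,y,x^2\) is \(L_v\)-invariant); and the paper's target is a linear, not affine, system, though carrying the constant \(1\) as a component of \(p\) handles this correctly as you note. Your rigidity observation that a definite (non-saddle) nonzero spectrum forces a unique equilibrium is sound in spirit and consistent with the paper's use of a saddle, but it does not repair the two gaps above.
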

	
	\begin{Ex}[A linearizing embedding of a planar system with two isolated equilibria]\label{first_example}
		Consider the linear system on \(\R^3\) given by
	\begin{equation}\label{linflow}
		\frac{d}{dt}\begin{pmatrix}
			x \\ y \\ z
		\end{pmatrix}=
	\left(
	\begin{array}{cc|c}
		0 & 1 & 0\\
		1 & 0 & 0\\
		\hline 
		0 & 0 & 0
	\end{array}
	\right)\begin{pmatrix}
		x \\ y \\ z
	\end{pmatrix}.
	\end{equation}
	This preserves the planes \(z=\text{constant}\) and generates the standard flow on hyperbolae in \(xy\)-space. For each \(z=\text{constant}\) the lines \(y=\pm x\) divide the plane into four invariant quadrants. Let \(A_k\) denote the plane \(z=k\) with the quadrant containing \((x,y)=(-1,0)\) removed. Now let \(\gamma\) be a smooth curve in the plane \(y=0\) which connects \((x,z)=(0,1)\) to the origin as shown in Figure~\ref{twopoint} \cite[Ch.~2]{lee2013smooth}, and let \(B\) denote the set of all orbits of \eqref{linflow} intersecting \(\gamma\). Consider the surface
	\begin{equation}
		\Sigma=A_{1}\cup B\cup A_{0}
	\end{equation}
as shown in Figure~\ref{surface}. This surface is smooth and diffeomorphic to the plane, and hence, implicitly defines a linearizing embedding \(f\colon \R^2\hookrightarrow \R^3\) of a planar system with two isolated equilibria as shown in the figure.
\end{Ex}

\begin{figure}
	\includegraphics[scale=0.7]{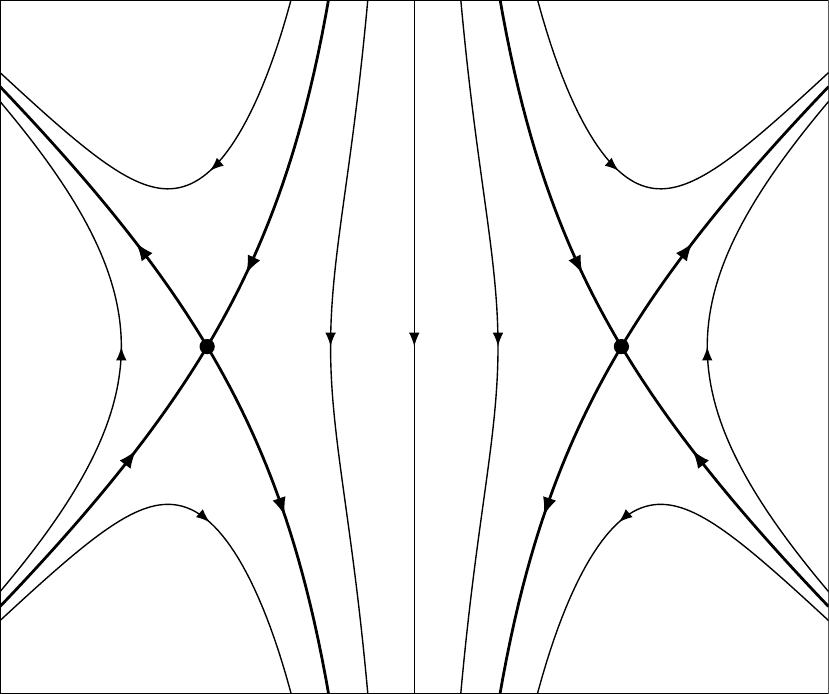}
	\vspace{2em}
	
	\includegraphics{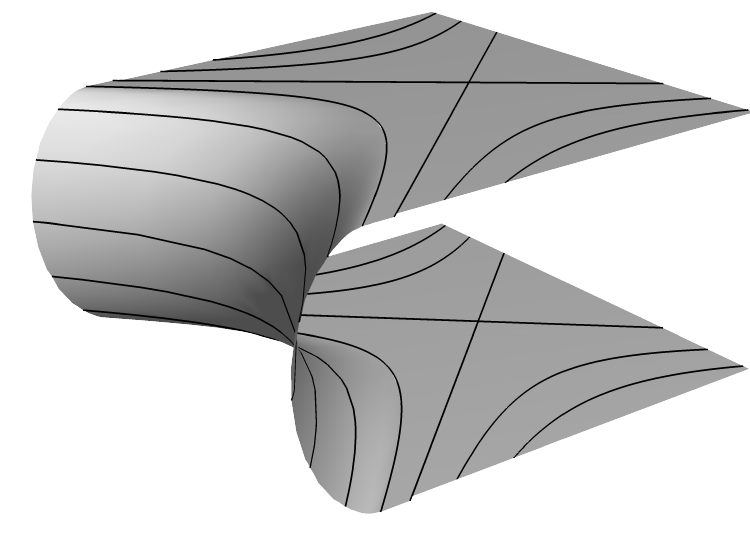}
	\caption{\label{surface}The phase portrait of a planar system with two isolated equilibria together with the image $\Sigma$ of a linearizing embedding into $\R^3$.}
\end{figure}

\begin{Rem}\label{extended}
	We may extend the previous example to give a planar system with any countable number $\ell\in \N\cup \{\infty\}$ of isolated equilibria. We do this by continuing to stack higher planes \(A_{2}, A_{3}, A_{4},\dots\) and alternately removing the quadrants containing \((x,y)=(\pm 1,0)\). The curve \(\gamma\) must now smoothly snake upwards joining the equilibria together, as shown for instance in Figure~\ref{fourpoint}. We shall denote the surface with exactly \(l> 2\) equilibria constructed in this way by \(\Sigma^l\).
\end{Rem}

\begin{Rem}\label{big_dim}
	We may enlarge such systems on the plane to \(\R^{k}\) for any \(k\ge 2\) by writing \((x,w)\in \R^2\times\R^{k-2}\) and setting, for instance, \(\dot{w}=w\). The map \((x,w)\mapsto (f(x),w)\) now defines a linearizing embedding of a system on \(\R^k\) with any desired countable number of isolated equilibria.
\end{Rem}

The surface \(\Sigma\) is not given by a graphlike embedding of \(\R^2\) into \(\R^3\). However, it might be possible to find an equivariant embedding of \(\R^3\) into some higher dimensional vector space whose restriction to  \(\Sigma\) is graphlike and hence a super-linearizing embedding of some dynamical system. This is the main idea that we use in our proof of Theorem~\ref{claim}. Before presenting the proof we must first discuss equivariant polynomial maps between vector spaces.

Consider a finite-dimensional vector space \(V\). The symmetric product \(\Sym^mV\) can be interpreted as homogeneous degree-\(m\) polynomials on the dual \(V^*\) by taking the evaluation of \(\eta\in V^*\) on \(v_1\odot\dots\odot v_m\) to be the product
\[
\langle\eta,v_1\rangle\cdots\langle\eta,v_m\rangle.
\]
Here \(\langle~,~\rangle\) denotes the pairing between \(V\) with its dual and \(\odot\) is the symmetrized tensor product \cite[pp.~473--474]{fulton1991representation}. We can then introduce the direct sum
\begin{equation}
	P^m(V^*)=\R\oplus V\oplus\Sym^2V\oplus\cdots\oplus\Sym^mV
\end{equation}
interpreted as the vector space of all degree-\(m\) polynomials on \(V^*\). The diagonal inclusion \(V\hookrightarrow\Sym^mV\) given by \(v\mapsto v\odot\cdots\odot v\) is a \(GL(V)\)-equivariant degree-\(m\) polynomial map, and by extension we may consider the natural map
\begin{equation}
	\Delta^m\colon V\hooklongrightarrow P^m(V^*).
\end{equation}
More explicitly, \(\Delta^m\) sends \(v\in V\) to the polynomial which when evaluated on \(\eta\in V^*\) yields
\[
\langle\eta,v\rangle+\langle\eta,v\rangle^2+\dots+\langle\eta,v\rangle^m.
\]
Note that \(\Delta^m\) is a smooth embedding of \(V\) into \(P^m(V^*)\) and is \(GL(V)\)-equivariant with respect to the action on polynomials \(p\) given by
\[
(g\cdot p)(\eta)=p(g^*\eta)
\]
for \(g\in GL(V)\) and where \(g^*\) is the adjoint. Thus, if \(f\) is a linearizing embedding, so is \(\Delta^m\circ f\).

\begin{Prop}\label{only_prop}
	Let \(f\) be a smooth embedding of \(\R^k\) into a real vector space \(V\). The composition \(\Delta^m\circ f\colon\R^k\hookrightarrow P^m(V^*)\) is a graphlike embedding if and only if there exist polynomials \(p_1,\dots, p_k\) of degree \(m\) on \(V\) for which the fibres of \(p=(p_1,\dots,p_k)\colon V\rightarrow\R^k\) intersect the image of \(f\) transversely in at most one point. 
\end{Prop}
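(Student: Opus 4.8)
The plan is to dualize the graphlike condition and transport it across $\Delta^m$. By the equivalent formulation of graphlikeness recalled above, $\Delta^m\circ f$ is graphlike if and only if there is a codimension-$k$ linear subspace $U\subseteq P^m(V^*)$ whose affine translates meet the image $\Delta^m(f(\R^k))$ transversely in at most one point. Writing such a $U$ as the kernel of a surjective linear map $L=(\lambda_1,\dots,\lambda_k)\colon P^m(V^*)\to\R^k$, the affine translates of $U$ are exactly the fibres of $L$. The first step is to identify each linear functional $\lambda_i$ on $P^m(V^*)$ with a polynomial on $V$: under the natural duality $(P^m(V^*))^*\cong \R\oplus V^*\oplus \Sym^2 V^*\oplus\cdots\oplus \Sym^m V^*$, the composite $p_i:=\lambda_i\circ\Delta^m$ is a degree-$m$ polynomial on $V$ (with vanishing constant term, since the $\Sym^0$-component of $\Delta^m(v)$ is $0$). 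Conversely every degree-$m$ polynomial on $V$ arises as $\lambda_i\circ\Delta^m$ up to an additive constant, and adding constants to the $p_i$ merely relabels the fibres. With this dictionary in place one has $p=(p_1,\dots,p_k)=L\circ\Delta^m$.

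The second step is a bijection on intersection points. Since $\Delta^m$ is injective and $L\circ\Delta^m=p$, the map $\Delta^m$ restricts to a bijection between $p^{-1}(c)\cap f(\R^k)$ and $L^{-1}(c)\cap\Delta^m(f(\R^k))$ for every $c\in\R^k$. Hence the fibres of $p$ meet $f(\R^k)$ in at most one point if and only if the affine translates of $U$ meet $\Delta^m(f(\R^k))$ in at most one point.

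The crux is to match the two transversality conditions at corresponding points, and this is where I expect the main work to lie. Fix an intersection point $v\in f(\R^k)$, write $W=T_v f(\R^k)$ (a $k$-plane) and $D=d(\Delta^m)_v$, which is injective since $\Delta^m$ is an embedding; then $T_{\Delta^m(v)}\Delta^m(f(\R^k))=D(W)$ and $dp_v=L\circ D$. Because in each ambient space the two intersecting submanifolds have complementary dimensions, transversality amounts to a direct-sum decomposition, and the claim reduces to the purely linear assertion that
\[
D(W)\cap\ker L=0\iff L\circ D\ \text{is surjective and}\ W\oplus\ker(L\circ D)=V.
\]
This is proved by observing that $D(W)\cap\ker L=0$ is equivalent to $L|_{D(W)}\colon D(W)\to\R^k$ being an isomorphism (as $\dim D(W)=k$), which forces $L\circ D$ onto and $W\cap\ker(L\circ D)=0$, and conversely; the injectivity of $D$ is used throughout. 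The direct-sum reading of transversality is exactly what makes this equivalence clean and what disposes of the degenerate cases in which $dp_v$ fails to be a submersion.

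Finally, I must ensure that the subspace $U$ really has codimension exactly $k$, i.e. that $L$ is surjective; otherwise $\ker L$ would not witness graphlikeness. This is automatic rather than an added hypothesis: since $f(\R^k)\neq\varnothing$, some fibre of $p$ meets $f(\R^k)$, and at that (unique, transverse) intersection point $dp_v=L\circ D$ is onto, whence $L$ is onto. Assembling the dictionary, the point-count bijection, the transversality equivalence, and this surjectivity remark yields both implications of the proposition.
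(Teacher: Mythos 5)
Your proof is correct and follows essentially the same route as the paper's: recast graphlikeness of \(\Delta^m\circ f\) in terms of linear functionals \(\lambda_1,\dots,\lambda_k\) on \(P^m(V^*)\) whose joint fibres meet the image transversely in at most one point, then use the fact that such functionals pull back through \(\Delta^m\) to degree-\(m\) polynomials on \(V\). The only difference is one of detail: you make explicit the constant-term dictionary, the bijection of intersection points, the linear-algebraic matching of the two transversality conditions, and the automatic surjectivity of \(L\), all of which the paper compresses into its final two sentences.
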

We will say that the submanifold \(\text{Im}(f)\) is \concept{tamed} by the polynomials \(p_1,\dots, p_k\).
\begin{proof}
	Recall that an embedding \(\R^k\hookrightarrow W\) is graphlike if and only if there exists a codimension-\(k\) subspace of \(W\) whose affine translates intersect the image of the embedding transversely in at most one point. Equivalently, there exist linear functions \(\eta_1,\dots,\eta_k\in W^*\) for which each fibre of \(\eta=(\eta_1,\dots,\eta_k)\colon W\rightarrow\R^k\) intersects the image of the embedding transversely in at most one point. Consequently, \(\Delta^m\circ f\) is graphlike if and only if there exist \(\eta_1,\dots,\eta_k\) in \((P^m(V^*))^*\) with this property. The result follows by noting that linear functions on \(P^m(V^*)\) pull back through \(\Delta^m\) to polynomials of degree \(m\) on \(V\).
\end{proof}
\begin{Ex}[A super-linearizing embedding of a planar system with two isolated equilibria]\label{second_example}
	Consider the planar dynamical system with linearizing embedding \(f\colon\R^2\hookrightarrow\R^3\) from Example~\ref{first_example}. Using Proposition~\ref{only_prop} we will show that this admits a super-linearizing embedding into the much larger 20-dimensional vector space \(P^3({\R^3}^*)\) by finding two degree-3 polynomials \(q\) and \(p\) on \(\R^3\) which tame the surface \(\Sigma\). 
	
	We begin by choosing \(q(x,y,z)=y\). Now consider the intersections of \(\Sigma\) with the planes \(y=\text{constant}\), as shown for instance in Figure~\ref{twopoint} for \(y=0\). For any fixed value of \(y\), this intersection viewed in the \(xz\)-plane extends in the negative \(x\)-direction no further than \(x=-\sqrt{1+y^2}\), since \(x^2-y^2\) is constant along orbits. Therefore, the curve 
	\begin{equation}
		z=\frac{\kappa}{x+(1+y^2)}+\frac{1}{2}
	\end{equation}
for any \(\kappa\) and for \(y\) fixed, intersects \(\Sigma\cap\{y=\text{constant}\}\) transversely in at most one point. Hence, we set \(p(x,y,z)=(z-\frac{1}{2})\left(x+(1+y^2)\right)\) to establish that \(q\) and \(p\) tame \(\Sigma\) as intended.
\end{Ex}

\begin{figure}
	\includegraphics[scale=1]{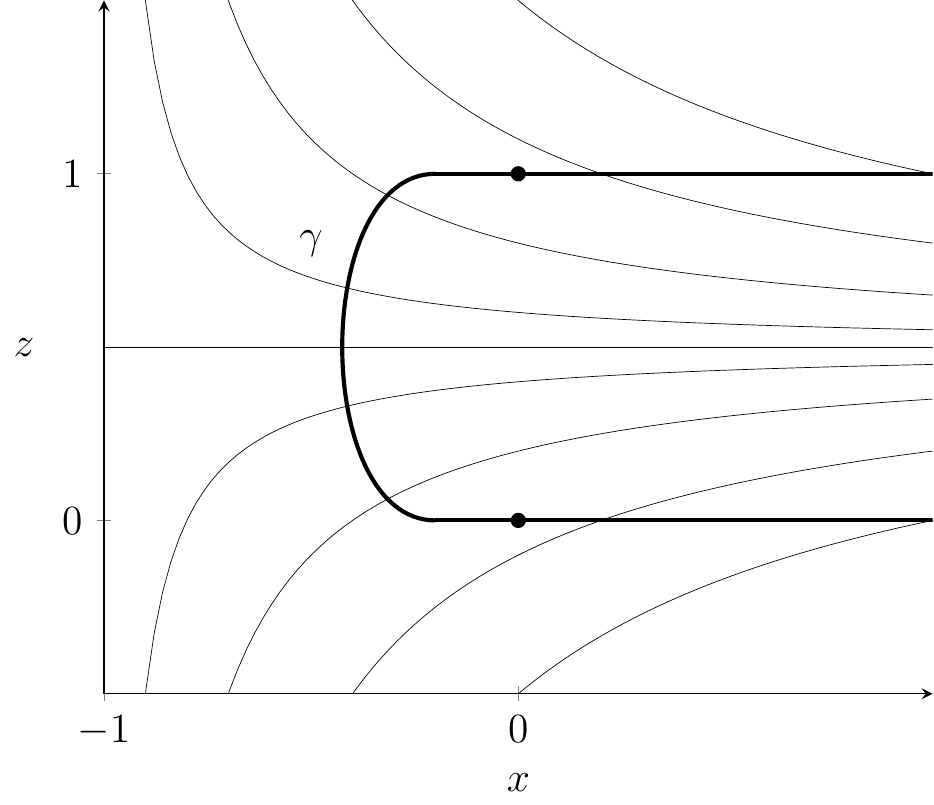}
	\caption{\label{twopoint}The intersection of the surface $\Sigma$ with $y=0$ is shown in bold. Also shown are contours of the polynomial $p(x,y,z)=(1+y^2)z+x(z-\frac{1}{2})$.}
\end{figure}

\begin{proof}[Proof of Theorem \ref{claim}]
	From the previous example we have already established the claim for the case of two isolated equilibria and \(k=2\). 
	By the technique of Remark~\ref{big_dim} it suffices to show that the extended surfaces \(\Sigma^l\) constructed in Remark~\ref{extended} for finite $\ell>2$ can also be tamed by some degree \(m\) polynomials \(q\) and \(p\). By Proposition~\ref{only_prop} this will imply that \(\Delta^m\circ f\) is an equivariant graphlike embedding, and hence, a super-linearizing embedding of some smoothly conjugate dynamical system.
	
	We again set \(q(x,y,z)=y\) and consider the intersections \(\Sigma^l\cap\{y=\text{constant}\}\) as shown for instance in Figure~\ref{fourpoint} for \(l=4\). We claim that for any fixed \(y\), the level sets of
	\begin{equation}
		p(x,y,z)=(1+y^{2})^{l-1}Mz+\textstyle x(z-\frac{1}{2})(z-\frac{3}{2})\cdots(z-l+\frac{3}{2})
	\end{equation}
transversely intersect the curve \(\gamma=\Sigma^l\cap\{y=\text{constant}\}\), where \(M\) is any positive constant larger than
\[
\max_{(x,z)\in R} \left\| x\frac{d}{dz}\textstyle (z-\frac{1}{2})(z-\frac{3}{2})\cdots(z-l+\frac{3}{2})\right\|
\]
and \(R\) is some closed rectangular region in the plane \(y=0\) with \(|x|<1\) and which contains all of the turns of \(\gamma\), as shown for instance in Figure~\ref{fourpoint}. To see why this is true consider the gradient of \(p\) in the plane \(y=\text{constant}\), 
\begin{align*}
	p_x&=\textstyle (z-\frac{1}{2})(z-\frac{3}{2})\cdots(z-l+\frac{3}{2})\\
	p_z&=(1+y^2)^{l-1}M+x\frac{d}{dz}\textstyle (z-\frac{1}{2})(z-\frac{3}{2})\cdots(z-l+\frac{3}{2}).
\end{align*}
For different fixed \(y\), rectangular regions containing the orbits through the turns of \(\gamma\) can be chosen to scale in the \(x\)-direction by a factor less than \(\sqrt{1+y^2}\). Therefore, by construction \(p_z>0\) along the turns of \(\gamma\). Furthermore, the sign of \(p_x\) always agrees with the sign of the \(x\)-derivative of \(\gamma\). It follows that the derivative of \(p\) along the curve \(\gamma\) as it moves upwards is positive everywhere. Each joint level set of \(q\) and \(p\) therefore intersects the surface \(\Sigma^l\) transversely exactly once, as desired.
\end{proof}

\begin{figure}
	\includegraphics[scale=1]{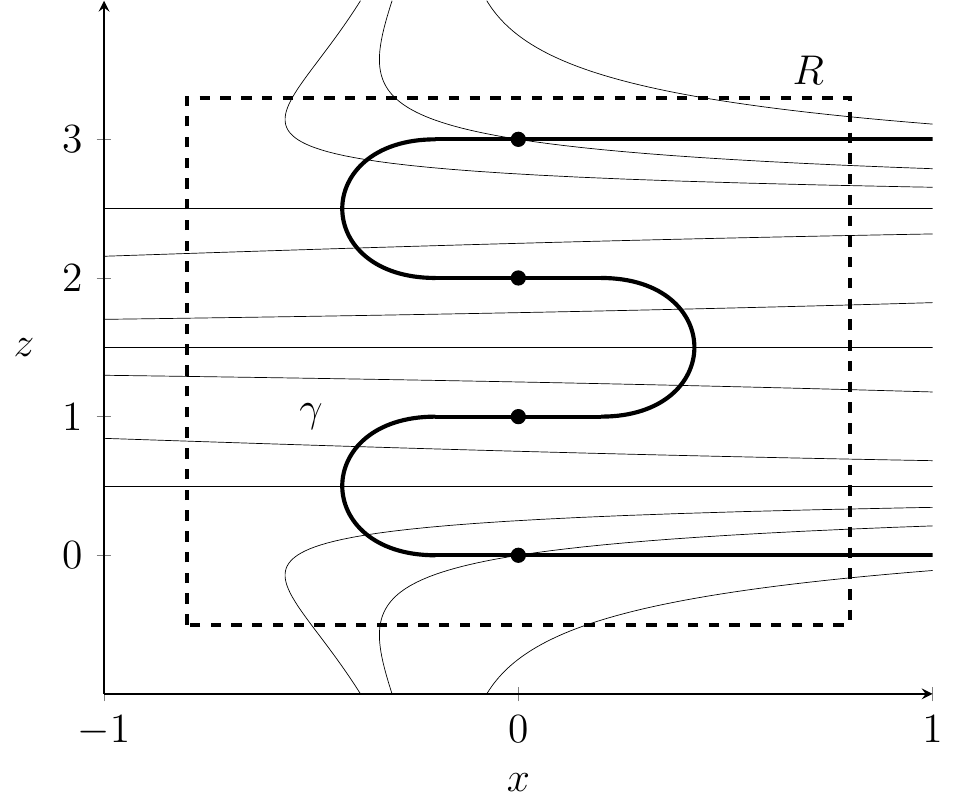}
	\caption{\label{fourpoint}The intersection of the surface $\Sigma^4$ with $y=0$ is shown in bold. Also shown are contours of the polynomial $p(x,y,z)=4z(1+y^2)^3+x(z-\frac{1}{2})(z-\frac{3}{2})(z-\frac{5}{2})$.}
\end{figure}

\begin{Rem}
	It is tempting to extend this proof to include the surface in Example~\ref{extended} with countably infinite isolated equilibria. However, a direct generalisation will not work. To see why, consider again the intersection \(\gamma=\Sigma^\infty\cap\{y=0\}\). This intersection is now a curve which snakes upwards with infinitely many turns. Suppose this curve can be tamed by some polynomial \(p(x,z)\). Then the derivative of \(p\) along \(\gamma\) must be nowhere zero, and hence \(p_x\) must alternate sign infinitely often along the line \(x=0\). This contradicts the Fundamental Theorem of Algebra since \(p_x(0,z)\) is a polynomial in \(z\).
\end{Rem}

Incidentally, the example of \(\gamma\subset\R^2\) in the previous remark shows that not every embedded submanifold can be tamed by polynomials. If one is interested in applying a polynomial embedding \(\Delta^m\) to super-linearize a dynamical system, it would be of interest to obtain necessary and sufficient conditions for an embedded submanifold to be tamed by polynomials, and to perhaps bound their degree.

To conclude, we would like to make some comments regarding the relevance of these counterexamples to the study of Koopman eigenmappings. For a dynamical system \((M,\Phi)\) a Koopman eigenfunction is a function \(\psi\) on \(M\) for which
\[
\frac{d}{dt}\psi\circ\Phi^t=\lambda\psi
\]
for all \(t\) and for some \(\lambda\in\R\). If one has multiple Koopman eigenfunctions \(\psi_1,\dots,\psi_n\) then \(x\mapsto(\psi_1(x),\dots,\psi_n(x))\) defines an equivariant map from \(M\) into a linear dynamical system on \(\R^n\). If one has enough functionally independent Koopman eigenfunctions, then this map will be a smooth immersion, and hence, is very closely related to our notion of a linearizing embedding. Indeed, in Example~\ref{first_example} the three functions \(x+y\), \(x-y\), and \(z\) each pull back through the embedding to give Koopman eigenfunctions on a planar dynamical system with multiple isolated equilibria. 

As far as we know, these are the first examples of {globally defined} smooth Koopman eigenfunctions for a dynamical system with multiple isolated equilibria. Furthermore, the linear span of these eigenfunctions seems to be the first known example of a non-trivial finite-dimensional subspace of smooth functions for a dynamical system with multiple isolated equilibria which is invariant under the action of the Koopman operator. Moreover, by using a super-linearizing embedding (as in Example~\ref{second_example}) such an invariant subspace additionally ``contains the state''.

Finally, we should caution that despite the counterexamples we have presented, there exist many obstructions which prevent the existence of linearizing embeddings for general dynamical systems. The existence of hetero- and homoclinic orbits between equilibria is one such obstruction (since these orbit types are not possible for linear systems),  and so too is the presence of multiple asymptotically stable equilibria (see the first footnote in \cite{kvalheim2023linearizability}). In addition, the main theorem of \cite{liu2023non} forbids the existence of linearizing embeddings for systems whose collection of \(\omega\)-limit set is countable and contains more than one element, and whose forward-orbits are all precompact. We note that this is consistent with our example since it contains unbounded forward-orbits which are not precompact.

%


\providecommand{\bysame}{\leavevmode\hbox to3em{\hrulefill}\thinspace}
\providecommand{\MR}{\relax\ifhmode\unskip\space\fi MR }
\providecommand{\MRhref}[2]{%
	\href{http://www.ams.org/mathscinet-getitem?mr=#1}{#2}
}
\providecommand{\href}[2]{#2}

\end{document}